\definecolor{airforceblue}{rgb}{0.36, 0.54, 0.66}
\numberwithin{equation}{section}
\newtheorem{theorem}{Theorem}[section]
\theoremstyle{plain}
\newtheorem{lemma}[theorem]{Lemma}
\theoremstyle{plain}
\theoremstyle{plain}
\theoremstyle{definition}
\newcommand{\N}{{\mathbb N}}
\newcommand{\R}{{\mathbb R}}
\newcommand{\eps}{\varepsilon}
\newcommand{\beq}{\begin{equation}}
\newcommand{\eeq}{\end{equation}}
\renewcommand{\le}{\leqslant}
\renewcommand{\ge}{\geqslant}
\newcommand{\W}{\mathcal{W}}
\newcommand{\C}{\mathcal{C}}
\newcommand{\fpl}{(-\Delta)_p^s\,}
\newcommand{\ds}{{{\rm d}}_\Omega^s}
\newcommand{\p}{p^*_s}
\def\XXint#1#2#3{{\setbox0=\hbox{$#1{#2#3}{\int}$ }
\vcenter{\hbox{$#2#3$ }}\kern-.6\wd0}}
\newenvironment{enumroman}{\begin{enumerate}

}{\end{enumerate}}
\title[Sobolev vs.\ H\"older minimizers for the fractional $p$-Laplacian]{Sobolev versus H\"older minimizers for the degenerate\\ fractional ${\bf p}$\,-\,Laplacian}
\author[A.\ Iannizzotto, S.\ Mosconi, M.\ Squassina]{Antonio Iannizzotto, Sunra Mosconi, and Marco Squassina}
\address[A.\ Iannizzotto]{Department of Mathematics and Computer Science
\newline\indent
Universit\`a degli Studi di Cagliari
\newline\indent
Viale L.\ Merello 92, 09123 Cagliari, Italy}
\email{antonio.iannizzotto@unica.it}
\address[S.\ Mosconi]{Dipartimento di Matematica e Informatica
\newline\indent
Universit\`a degli Studi di Catania
\newline\indent
Viale A.\ Doria 6, 95125 Catania, Italy}
\email{mosconi@dmi.unict.it}
\address[M.\ Squassina]{Dipartimento di Matematica e Fisica
\newline\indent
Universit\`a Cattolica del Sacro Cuore
\newline\indent
Via dei Musei 41, 25121 Brescia, Italy}
\email{marco.squassina@unicatt.it}
\subjclass[2010]{35D10, 35R11, 47G20.}
\keywords{Fractional $p$-Laplacian, Fractional Sobolev spaces, Weighted H\"older regularity, Boundary regularity.}
\begin{document}

\begin{abstract}
We consider a nonlinear pseudo-differential equation driven by the fractional $p$-Laplacian $\fpl$ with $s\in(0,1)$ and $p\ge 2$ (degenerate case), under Dirichlet type conditions in a smooth domain $\Omega$. We prove that local minimizers of the associated energy functional in the fractional Sobolev space $W^{s,p}_0(\Omega)$ and in the weighted H\"older space $C^0_s(\overline\Omega)$, respectively, do coincide.
\end{abstract}

\maketitle

\section{Introduction and main result}\label{sec1}

\noindent
The 'Sobolev versus H\"older minimizers problem' is a classical one in nonlinear analysis, arising from the application of variational methods to boundary value problems of the following general type:
\beq\label{bvp}
\begin{cases}
Lu=f(x,u) & \text{in $\Omega$} \\
u\in\W.
\end{cases}
\eeq
Here $\Omega\subset\R^N$ is a (generally bounded and smooth) domain, $\W$ is a Sobolev-type function space defined on $\Omega$, incorporating some boundary condition, $L:\W\to\W^*$ is a (linear or nonlinear) elliptic differential operator in divergence form, and the reaction $f:\Omega\times\R\to\R$ is a Carath\'eodory mapping satisfying suitable growth conditions. In many relevant cases, weak solutions of problem \eqref{bvp} coincide with critical points of an energy functional $J\in C^1(\W)$ of the form
\beq\label{j}
J(u)=\frac{\|u\|_\W^p}{p}-\int_\Omega F(x,u)\,dx,
\eeq
where $p>1$ and $F(x,\cdot)$ denotes a primitive of $f(x,\cdot)$. Among critical points, local minimizers play a special r\^ole, as they are the starting point to apply mountain pass or minimax schemes, as well as Morse-theoretic arguments aimed at multiplicity results. 
\vskip2pt
\noindent 
In order to localize solutions of \eqref{bvp},  truncations of the reaction are often employed: as a typical example, when $f(\cdot,0)=0$, positive solutions are detected by using a modified functional $J_+$ defined as in \eqref{j}, but with $f$ replaced by $f_+(x,t)=f(x,t^+)$ (here $t^+$ denotes the positive part of $t\in\R$). Alternatively, one may truncate $f(x,\cdot)$ outside a sub-solution and a super-solution of \eqref{bvp} in order to 'trap' solutions within a given functional interval. Among many examples, we refer the reader to the classical works \cite{ABC,LL,W}. In all these cases natural constraints are employed, so that weak comparison arguments ensure that critical points of the truncated functionals  are  critical points of $J$ as well. A drawback of the truncation method is that the topological nature of such critical points is {\em a priori} lost in the process, along with the valuable information that can be derived from it: in particular, it is not {\em a-priori} ensured that local minimizers of the truncated functionals are minimizers of $J$. For instance, this is the case for the truncation at $0$, as $J$ and $J_+$ only agree on the positive cone $\W_+$, which in general has an empty interior.
\vskip2pt
\noindent
In \cite{BN}, Brezis and Nirenberg proposed an answer to this issue for the Dirichlet problem with $L=-\Delta$ and $\W=H^1_0(\Omega)$, by proving that local minimizers of $J$ in $H^1_0(\Omega)$ coincide with those in the space $\C=C^1(\overline\Omega)$ (whose positive cone has a nonempty interior). Such result relies on classical elliptic regularity theory, as well as on the linearity of the operator (coincidence is first proved for $0$, and then extended to any minimizer $u$ by translation). A key point is that minimizers of $J$ on closed balls in $H^1_0(\Omega)$ solve a problem of the form \eqref{bvp}, involving a Lagrange's multiplier as well.
\vskip2pt
\noindent
When nonlinear operators are considered, the question becomes more involved. In \cite{GPM}, Garc\`{\i}a Azorero, Peral Alonso and Manfredi extended the coincidence result to the $p$-Laplacian operator $L=-\Delta_p$, with $\W=W^{1,p}_0(\Omega)$ ($p>1$) and again $\C=C^1(\overline\Omega)$. In this case, to deal with non-zero minimizers, a simple translation would not work, but the authors overcame such difficulty by employing the nonlinear regularity theory of Lieberman \cite{L} for a more general operator than $-\Delta_p$. Since then, 'Sobolev versus H\"older' results were proved for a number of \eqref{bvp}-type problems involving several operators (both linear and nonlinear) and boundary conditions (see for instance \cite{F1,GP,IP}). In particular, we mention the approach of Brock, Iturriaga and Ubilla \cite{BIU}, where constrained minimization of $J$ is performed on balls in $L^q(\Omega)$ ($q<p^*$), so that the modified equation is still of $p$-Laplacian type, though involving an additional power term (see also \cite{FGU}).
\vskip2pt
\noindent
When it comes to nonlocal operators of fractional order, problem \eqref{bvp} is naturally set in a fractional Sobolev space $\W=H^s_0(\Omega)$, but $C^1$-regularity up to the boundary is not to be expected any more. For instance, given $s\in (0, 1)$, the function $(1-|x|^{2})_{+}^{s}$ solves 
\[\begin{cases}
(-\Delta)^{s}u=1 & \text{in $B_{1}$}\\
u= 0 & \text{in $\R^{N}\setminus B_{1}$,}
\end{cases}\]
but clearly $|\nabla u|$  blows up near the boundary. The issue, moreover, does not only involve the boundary behaviour: for $f\in L^{\infty}(\R^{N})$, the optimal interior regularity for solutions of $(-\Delta)^{s}u=f$  is $C^{2s}$ when $s\neq 1/2$, so that when $s<1/2$ we cannot expect even Lipschitz continuity {\em in the interior} (see \cite{RS}). In the fractional framework, the natural function space to work with is constructed through a {\em weighted} H\"older regularity condition, namely assuming that $u/\ds$ admits a continuous extension to $\overline\Omega$, where $\ds(x)={\rm dist}(x,\R^{N}\setminus \Omega)^s$. We denote by $\C=C^0_s(\overline\Omega)$ the space of such $u$'s, equipped with the norm $\|u\|_{C^{0}_{s}}=\sup_{\Omega}|u|/\ds$, while we set $L=(-\Delta)^s$ (the fractional Laplacian of order $s\in(0,1)$) and $\W=H^s_0(\Omega)$. In \cite{IMS}, equivalence of minimizers for $J$ in $\W$ and in $\C$ was proved following the approach of \cite{BN} (see also \cite{BCSS,F} for similar results). 
\vskip2pt
\noindent
In this work we propose a 'Sobolev versus H\"older' result for a nonlinear nonlocal equation driven by the degenerate fractional $p$-Laplacian, namely the nonlinear extension of $(-\Delta)^s$. For any $s\in(0,1)$, $p\ge 1$, $N>ps$ we define the Gagliardo (semi-)norm of a measurable function $u:\R^N\to\R$ as
\[\|u\|_{s,p}^p = \iint_{\R^{2N}}|u(x)-u(y)|^p\,d\mu,\]
where we use the abbreviated notation
\[d\mu = \frac{dx\,dy}{|x-y|^{N+ps}}.\]
Further, let $\Omega\subset\R^N$ be a bounded domain with a $C^{1,1}$-boundary and set
\[W^{s,p}_0(\Omega) = \big\{u\in L^p(\R^N):\,\|u\|_{s,p}<\infty,\,u=0\,\text{a.e.\ in $\R^N\setminus\Omega$}\big\}.\]
The space $W^{s,p}_0(\Omega)$, endowed with the norm $\|\cdot\|_{s,p}$, is a separable, uniformly convex Banach space with dual denoted by $W^{-s,p'}(\Omega)$. The embedding $W^{s,p}_0(\Omega)\hookrightarrow L^q(\Omega)$ is continuous for all $q\in[1,\p]$ and compact for all $q\in[1,\p)$, where
\[\p = \frac{Np}{N-ps}\]
denotes the fractional Sobolev exponent. We define the fractional $p$-Laplacian as an operator $\fpl:W^{s, p}_0(\Omega)\to W^{-s, p'}(\Omega)$ given by
\[\langle\fpl u,\varphi\rangle = \iint_{\R^{2N}}(u(x)-u(y))^{p-1}(\varphi(x)-\varphi(y))\,d\mu,\]
i.e., $\fpl$ is the Fr\'echet derivative of $u\mapsto \|u\|_{s,p}^{p}/p$ (see \cite{ILPS} for details). Note that $\fpl$ is both nonlinear and nonlocal. We will consider the following Dirichlet problem:
\beq\label{dir}
\begin{cases}
\fpl u=f(x,u) & \text{in $\Omega$} \\
u=0 & \text{in $\R^{N}\setminus \Omega$,}
\end{cases}
\eeq
$f:\Omega\times\R\to\R$ is a Carath\'eodory mapping obeying the following at most critical growth condition for a.e.\ $x\in\Omega$ and all $t\in\R$:
\beq\label{gro}
|f(x,t)|\le C_0\, (1+|t|^{\p-1}) \qquad (C_0>0).
\eeq
According to the general formula \eqref{j}, the energy functional for problem \eqref{dir} is $J\in C^1(W^{s,p}_0(\Omega))$, defined by
\[J(u) = \frac{\|u\|_{s,p}^p}{p}-\int_\Omega F(x,u)\,dx,\]
with
\[F(x,t) = \int_0^t f(x,\tau)\,d\tau.\]
In the present case, the r\^ole of the space $\C$ is played by the weighted H\"older space
\[C^0_s(\overline\Omega) = \Big\{u\in C^0(\overline\Omega):\,\frac{u}{\ds}\in C^0(\overline\Omega)\Big\},\]
endowed with the norm
\[\|u\|_{C^{0}_{s}}=\Big\|\frac{u}{\ds}\Big\|_\infty.\]
Our main result is the following, proving coincidence of Sobolev and H\"older minimizers of $J$:

\begin{theorem}\label{svh}
Let $p\ge 2$, $s\in (0, 1)$, $N>ps$, $\Omega\subseteq \R^{N}$ be a bounded domain with a $C^{1,1}$-boundary, $f:\Omega\times\R\to\R$ be a Carath\'eodory mapping satisfying \eqref{gro}. Then, for any $u_0\in W^{s, p}_0(\Omega)$, the following are equivalent:
\begin{enumroman}
\item\label{svh1} there exists $\rho>0$ such that $J(u_0+v)\ge J(u_0)$ for all $v\in W^{s, p}_0(\Omega)$, $\|v\|_{s,p}\le\rho$;
\item\label{svh2} there exists $\sigma>0$ such that $J(u_0+v)\ge J(u_0)$ for all $v\in W^{s, p}_0(\Omega)\cap C^{0}_{s}(\overline\Omega)$, $\|v\|_{C^{0}_{s}}\le\sigma$.
\end{enumroman}
\end{theorem}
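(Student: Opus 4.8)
The implication \eqref{svh1}$\Rightarrow$\eqref{svh2} is trivial, since the embedding $W^{s,p}_0(\Omega)\cap C^0_s(\overline\Omega)\hookrightarrow W^{s,p}_0(\Omega)$ is continuous: if $u_0$ is a local $W^{s,p}_0$-minimizer, any $C^0_s$-small perturbation is in particular $W^{s,p}_0$-small, up to shrinking $\sigma$. The substance of the theorem is the converse, and the plan is to argue by contradiction, mimicking the strategy of \cite{GPM} and \cite{IMS} adapted to the degenerate fractional setting. Assume $u_0$ is a local $C^0_s$-minimizer but not a local $W^{s,p}_0$-minimizer. Then for every $\eps>0$ one can find a solution $u_\eps$ of the constrained minimization problem
\[
J(u_\eps)=\min\big\{J(u_0+v):\,v\in W^{s,p}_0(\Omega),\ \|v\|_{s,p}\le\eps\big\}<J(u_0),
\]
with $\|u_\eps-u_0\|_{s,p}=\eps$ (the minimum being attained by weak lower semicontinuity of $J$ and compactness of the subcritical embedding, after noting the critical term is handled by the growth bound and the strict inequality). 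Writing down the Euler--Lagrange equation for this constrained problem, $u_\eps$ solves
\[
\fpl u_\eps=\frac{f(x,u_\eps)}{1+\lambda_\eps}\quad\text{in }\Omega,\qquad u_\eps=0\ \text{in }\R^N\setminus\Omega,
\]
for some Lagrange multiplier $\lambda_\eps\ge 0$; here the degeneracy $p\ge 2$ is what makes the rescaled right-hand side legitimate (the term coming from differentiating $\|v\|_{s,p}^p$ is, up to the scalar $1+\lambda_\eps$, again of fractional $p$-Laplacian type, unlike what happens for $1<p<2$).

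The key step is then a uniform regularity estimate: the right-hand sides $f(x,u_\eps)/(1+\lambda_\eps)$ are bounded in $L^\infty$-type (or suitable $L^q$) norms independently of $\eps$, because $\|u_\eps\|_{s,p}\le\|u_0\|_{s,p}+1$ gives an $L^{\p}$-bound, hence by the subcritical-to-critical growth \eqref{gro} and iteration (a De Giorgi--Moser / Brezis--Kato bootstrap for the fractional $p$-Laplacian, available from the regularity theory referenced in the excerpt) an $L^\infty$-bound uniform in $\eps$. Feeding this into the weighted Hölder boundary regularity theory for the degenerate fractional $p$-Laplacian — precisely the fact that $\fpl u = g$ with $g\in L^\infty$, $u=0$ outside $\Omega$, $\partial\Omega\in C^{1,1}$ forces $u\in C^0_s(\overline\Omega)$ (indeed $u/\ds\in C^{0,\alpha}(\overline\Omega)$ for some $\alpha>0$) with an estimate $\|u\|_{C^{0,\alpha}_s}\le C\|g\|_\infty^{1/(p-1)}$ — we obtain that $\{u_\eps-u_0\}$ is bounded in the weighted Hölder space $C^{0,\alpha}_s(\overline\Omega)$, uniformly in $\eps$. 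Since $C^{0,\alpha}_s(\overline\Omega)$ embeds compactly into $C^0_s(\overline\Omega)$, along a sequence $\eps_n\to 0$ we get $u_{\eps_n}\to u_*$ in $C^0_s(\overline\Omega)$; but $\|u_{\eps_n}-u_0\|_{s,p}=\eps_n\to 0$ forces $u_*=u_0$, so $\|u_{\eps_n}-u_0\|_{C^0_s}\to 0$. For $n$ large this contradicts $J(u_{\eps_n})<J(u_0)$ together with the local $C^0_s$-minimality of $u_0$, completing the proof.

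The main obstacle I anticipate is establishing the uniform weighted Hölder estimate for the constrained minimizers with a constant independent of $\eps$ — in particular, making sure the Lagrange multiplier does not spoil the bound (it only helps, since $1+\lambda_\eps\ge 1$, so the right-hand side is never amplified) and that the boundary regularity theory cited for the degenerate fractional $p$-Laplacian indeed yields the weighted Hölder conclusion $u/\ds\in C^{0,\alpha}(\overline\Omega)$ with the right scaling in $\|g\|_\infty$. A second, more technical point is the $L^\infty$-bootstrap: because the growth in \eqref{gro} is \emph{critical}, the standard Moser iteration does not close immediately, and one needs the Brezis--Kato-type argument exploiting that $\|u_\eps\|_{\p}$ is bounded (not merely finite) to run finitely many iteration steps with uniformly controlled constants. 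Both difficulties are local-in-$\eps$ uniform versions of statements that hold for a single equation, so the work is in tracking constants rather than in new phenomena; once these uniform estimates are in hand, the compactness-and-contradiction scheme is routine.
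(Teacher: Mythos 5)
Your proof of \ref{svh1}$\Rightarrow$\ref{svh2} is incorrect as stated. The claim that ``any $C^0_s$-small perturbation is in particular $W^{s,p}_0$-small'' is false: on $W^{s,p}_0(\Omega)\cap C^0_s(\overline\Omega)$, the $C^0_s$-norm (an $L^\infty$-type quantity weighted by $\ds$) does not control the Gagliardo seminorm, so there is no continuous embedding of the $C^0_s$-topology into the $W^{s,p}_0$-topology. Indeed the paper explicitly flags (Remark $(b)$ after the theorem) that, unlike the local case, $C^0_s(\overline\Omega)\not\subset W^{s,p}_0(\Omega)$ and \emph{both} implications require proof. The paper's argument for this direction is not an embedding: it takes a contradicting sequence $u_n\to u_0$ in $C^0_s$, hence in $L^\infty$, uses $J(u_n)<J(u_0)$ to deduce $\limsup_n\|u_n\|_{s,p}\le\|u_0\|_{s,p}$, combines this with weak lower semicontinuity to get norm convergence, and then upgrades to strong $W^{s,p}_0$-convergence via uniform convexity. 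This piece of structure is essential and cannot be bypassed.

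For \ref{svh2}$\Rightarrow$\ref{svh1}, you constrain on $W^{s,p}_0$-balls centered at $u_0$ (the Brezis--Nirenberg/GPM route), but the Euler--Lagrange equation you write, $\fpl u_\eps = f(x,u_\eps)/(1+\lambda_\eps)$, is wrong whenever $u_0\neq 0$: the derivative of $u\mapsto\|u-u_0\|_{s,p}^p/p$ is $\varphi\mapsto\iint\big((u-u_0)(x)-(u-u_0)(y)\big)^{p-1}(\varphi(x)-\varphi(y))\,d\mu$, which is the fractional $p$-Laplacian applied to $u-u_0$, not a scalar multiple of $\fpl u$. For a nonlinear, nonlocal operator one cannot absorb this into a rescaling of the right-hand side; this is precisely why the paper follows Brock--Iturriaga--Ubilla and constrains instead on $L^{\p}$-balls, so the multiplier term is the simple local power $\lambda_n(u_n-u_0)^{\p-1}$ of favorable sign. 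Two further gaps: (a) existence of the constrained minimizer is not automatic under the critical growth \eqref{gro} (the $F$-term is not weakly continuous on $W^{s,p}_0$-balls); the paper handles this by truncating the reaction to $f_n(x,t)=f(x,[t]_{\kappa_n})$, minimizing the resulting coercive, weakly lsc functional $J_n$, and then undoing the truncation a posteriori once uniform $L^\infty$ convergence is established. (b) The uniform $L^\infty$ estimate, which you correctly identify as the main obstacle, is where the paper's Lemma \ref{mon} (the discrete monotonicity inequality $\|(u-v)^{(p+q-1)/p}\|_{s,p}^p\le Cq^{p-1}\langle\fpl u-\fpl v,(u-v)^q\rangle$) plays an irreducible role: it is what makes the Moser iteration on $w_n=u_n-u_0$ close, in two stages (first to a fixed high exponent $\bar q$, then to $L^\infty$). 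Without a replacement for this lemma the bootstrap does not run. Finally, the paper also proves a uniform $L^\infty$ bound on the multiplier term $\lambda_n(u_n-u_0)^{\p-1}$ before invoking Lemma \ref{reg}; your scheme relies on $\lambda_\eps$ only helping, which is an artifact of the (incorrect) rescaled equation.
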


\noindent
We make some comments on Theorem \ref{svh}:
\begin{itemize}
\item[$(a)$] {\em Choice of the space}. There are many reasons why $C^{0}_{s}(\overline\Omega)$ is a natural choice where to settle this kind of result. Mainly, such choice is dictated by the results of \cite{IMS2} (see also \cite{IMS1,IMS2}), where an {\em a priori} bound for solutions of \eqref{dir} in the space $C^{\alpha}_{s}(\overline\Omega)=\{u\in C^{\alpha}(\overline\Omega):u/\ds\in C^{\alpha}(\overline\Omega)\}$ for some $\alpha>0$, which compactly embeds into $C^{0}_{s}(\overline\Omega)$ (see Section \ref{sec2} for details). Plus, in \cite{DQ} (see also \cite{J}) the following version of Hopf's lemma was proved: any solution $u\ge 0$ of \eqref{dir} with non-negative right hand side either vanishes identically, or $u/\ds\ge c$ for some $c>0$. In particular, these signed solutions belongs to the interior of the non-negative cone in $C^{0}_{s}(\overline\Omega)$.
\item[$(b)$] {\em Method of proof.} Our strategy is more in the spirit of \cite{BIU} rather than of \cite{GPM}, with constrained minimization on $L^{\p}(\Omega)$-balls in order to deal with possibly critical problems, and employs as well a special monotonicity property of $\fpl$ (see Section \ref{sec3} for the detailed proof). Notice that in references such as \cite{BN,GPM,BIU} only the implication \ref{svh2} $\Rightarrow$ \ref{svh1} is considered, as the other one is trivial due to  $C^1(\overline\Omega)\hookrightarrow W^{1,p}_0(\Omega)$. A typical feature of the nonlocal framework is that $C^{0}_{s}(\overline\Omega)$ is not included in $ W^{s, p}_0(\Omega)$, so we have to prove both implications.
\item[$(c)$] {\em Applications.} The semi-linear case $p=2$ of Theorem \ref{svh} proved in \cite{IMS} has already been applied in a number of settings, see e.g. \cite{DI,FP,U} or the survey \cite{MS}. We believe that Theorem \ref{svh} will prove to be equally useful in the quasi-linear setting. Already in \cite[Theorem 5.3]{ILPS}, a multiplicity result for problem \eqref{dir} was proved under the conjecture that a version of Theorem \ref{svh} holds: such result is now fully achieved. In Section \ref{sec4}, we will briefly describe an illustrative application.
\item[$(d)$] {\em The singular case.} We remark that our result is only proved in the degenerate (or superquadratic) case $p\ge 2$. This is due to the fact that the $C^\alpha_s(\overline\Omega)$-regularity mentioned above has so far only been proved in this setting. For the singular case $p\in(1,2)$, the boundary regularity issue is therefore still open, nevertheless the corresponding case of Theorem \ref{svh} could be easily proven with only slight modifications, using Lemma \ref{smon} toghether with the putative singular counterpart of \cite{IMS3}.
\end{itemize}
\vskip4pt
\noindent
{\bf Notation.} Throughout the paper we will use the short notation $a^q=|a|^{q-1}a$ for all $a\in\R$, $q\ge 1$. We will denote $\|\cdot\|_q$ the usual norm of $L^q(\Omega)$ for all $q\in[1,\infty]$. Finally, $C$ will denote several positive constants, only depending on the data of the problem.

\section{Preliminaries}\label{sec2}

\noindent
In this section we introduce some technical results which will be used in the proof of our main theorem. First, we recall that $u\in W^{s,p}_0(\Omega)$ is a (weak) solution of problem \eqref{dir} iff for all $\varphi\in W^{s,p}_0(\Omega)$
\[\langle\fpl u,\varphi\rangle = \int_\Omega f(x,u)\varphi\,dx,\]
i.e., iff $J'(u)=0$ in $W^{-s,p'}(\Omega)$. We recall from \cite[Theorem 3.3, Remark 3.8]{CMS} the following {\em a priori} bound for weak solutions of \eqref{dir}:

\begin{lemma}\label{apb}
There exists $\eps_{0}=\eps_{0}(N, p, s, C_{0})>0$ such that if $u\in W^{s,p}_0(\Omega)$ solves \eqref{dir} under the growth condition \eqref{gro} and $K>0$ fulfills
\[\int_{\{|u|\ge K\}}|u|^{\p}\,dx<\eps_{0},\]
then $\|u\|_{\infty}\le C$ with $C=C(N, p, s, C_{0}, \|u\|_{s,p}, K)>0$.
\end{lemma}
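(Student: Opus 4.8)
The plan is to run a De Giorgi--Stampacchia level-set iteration; the only non-routine point is the at most critical growth \eqref{gro}, whose contribution is controlled by absorbing it into the energy, using the smallness of $\int_{\{|u|\ge K\}}|u|^{\p}$. Since $-u$ solves \eqref{dir} with $f(x,t)$ replaced by the Carath\'eodory map $-f(x,-t)$ (still obeying \eqref{gro}), and the quantity $\int_{\{|u|\ge K\}}|u|^{\p}$ is invariant under $u\mapsto-u$, it suffices to bound $u$ from above, i.e.\ to show $u\le M$ a.e.\ in $\Omega$ for a suitable $M>0$.

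For a level $k>0$ I would test \eqref{dir} with $w_k:=(u-k)_+\in W^{s,p}_0(\Omega)$ (legitimate since $u=0$ a.e.\ outside $\Omega$ and $t\mapsto(t-k)_+$ is $1$-Lipschitz). The elementary inequality, valid for $p\ge2$, $k>0$ and all $a,b\in\R$,
\[(a^{p-1}-b^{p-1})\bigl((a-k)_+-(b-k)_+\bigr)\ge\bigl|(a-k)_+-(b-k)_+\bigr|^{p}\]
(a consequence of the monotonicity of $t\mapsto t^{p-1}$ on $\R$ and of its super-additivity on $[0,\infty)$), applied with $a=u(x)$, $b=u(y)$ and integrated against $d\mu$, gives $\langle\fpl u,w_k\rangle\ge\|w_k\|_{s,p}^{p}$; hence, by the weak formulation and the fractional Sobolev inequality ($S=S(N,p,s)$),
\[\|w_k\|_{\p}^{p}\le S\int_\Omega f(x,u)\,w_k\,dx\le SC_0\int_{A_k}\bigl(1+|u|^{\p-1}\bigr)w_k\,dx,\qquad A_k:=\{u>k\}.\]
The core estimate is of the right-hand side, for $k\ge\max\{1,K\}$. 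Splitting $A_k=\{k<u\le2k\}\cup A_{2k}$: on $\{k<u\le2k\}$ one has $1\le u^{\p-1}\le 2^{\p-1}k^{\p-1}$, contributing $\le 2^{\p}k^{\p-1}\int_{A_k}w_k\,dx$; on $A_{2k}$ one has $u\le2w_k$ and $u>1$, so $(1+|u|^{\p-1})w_k\le 2^{\p}w_k^{\p}$, and since $A_{2k}\subseteq\{|u|\ge K\}$ and $w_k\le u$ there, H\"older's inequality with exponents $\p/(\p-p)$ and $\p/p$ gives
\[\int_{A_{2k}}w_k^{\p}\,dx=\int_{A_{2k}}w_k^{\p-p}\,w_k^{p}\,dx\le\Bigl(\int_{A_{2k}}|u|^{\p}\,dx\Bigr)^{\frac{\p-p}{\p}}\Bigl(\int_\Omega w_k^{\p}\,dx\Bigr)^{\frac{p}{\p}}\le\eps_0^{\frac{\p-p}{\p}}\|w_k\|_{\p}^{p}.\]
Choosing $\eps_0=\eps_0(N,p,s,C_0)$ so small that $2^{\p}C_0S\,\eps_0^{(\p-p)/\p}\le\frac12$, the term $\frac12\|w_k\|_{\p}^{p}$ is absorbed into the left-hand side, and one more use of H\"older on $A_k$ yields, with $c_0=c_0(N,p,s,C_0)$,
\[\|w_k\|_{\p}^{p-1}\le c_0\,k^{\p-1}|A_k|^{1-\frac1{\p}}.\]

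It remains to iterate along $k_n:=M(1-2^{-n-1})$, where $M:=2\max\{1,K\}$, so that $k_n\uparrow M$ and $k_n\ge\max\{1,K\}$ for all $n$. Since $w_{k_n}>k_{n+1}-k_n=M2^{-n-2}$ on $A_{k_{n+1}}$, Chebyshev gives $|A_{k_{n+1}}|\le (M2^{-n-2})^{-\p}\|w_{k_n}\|_{\p}^{\p}$; inserting this into the last estimate at level $k_{n+1}$ and setting $z_n:=\|w_{k_n}\|_{\p}^{p-1}$ leads to a recursion
\[z_{n+1}\le C\,B^{n}\,z_n^{\delta},\qquad C=C(N,p,s,C_0),\quad B=2^{\p-1}>1,\quad \delta=\tfrac{\p-1}{p-1}>1,\]
where $\delta>1$ precisely because $\p>p$. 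The standard fast-geometric-decay lemma then yields $z_n\to0$ as soon as $z_0\le T_0:=C^{-1/(\delta-1)}B^{-1/(\delta-1)^{2}}$, a constant depending only on $N,p,s,C_0$. But $\frac M2=\max\{1,K\}\ge K$, so $z_0=\|(u-\frac M2)_+\|_{\p}^{p-1}$ with $\|(u-\frac M2)_+\|_{\p}^{\p}\le\int_{\{|u|\ge K\}}|u|^{\p}<\eps_0$, i.e.\ $z_0<\eps_0^{(p-1)/\p}$; since $\eps_0$ is still at our disposal we also require $\eps_0\le T_0^{\p/(p-1)}$ (again only a constraint in terms of $N,p,s,C_0$), so that $z_0\le T_0$. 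Then $\|(u-k_n)_+\|_{\p}=z_n^{1/(p-1)}\to0$, hence $\|(u-M)_+\|_{\p}=0$ and $u\le M$ a.e.\ in $\Omega$. Applying the same reasoning to $-u$ gives $\|u\|_\infty\le M=2\max\{1,K\}$, which is in particular of the form $C(N,p,s,C_0,\|u\|_{s,p},K)$ (in fact $\|u\|_{s,p}$ does not appear).

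The step I expect to be delicate is precisely the decomposition of the critical term: one must peel off the part of $|u|^{\p-1}w_k$ where $u$ is large and turn it into an absorbable multiple of $\|w_k\|_{\p}^{p}$ — which is exactly where the hypothesis $\int_{\{|u|\ge K\}}|u|^{\p}<\eps_0$ is used — while checking that the remaining pieces still carry the super-linear factor $|A_k|^{1-1/\p}$ needed to make the De Giorgi exponent $\delta=(\p-1)/(p-1)$ exceed $1$; a minor additional point is that the single constant $\eps_0$ must be small enough to simultaneously close the absorption and keep the starting value $z_0$ below the fixed iteration threshold $T_0$.
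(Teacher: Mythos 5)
Your proof is correct, but it does not follow the paper's route, because the paper does not prove Lemma \ref{apb} at all: it imports it from \cite[Theorem 3.3, Remark 3.8]{CMS}, where the bound is obtained by a Moser-type iteration on power truncations of $u$ and depends on $\|u\|_{s,p}$ (a dependence the paper explicitly comments on right after the lemma). What you give instead is a self-contained De Giorgi level-set iteration, and its steps check out: the coercivity inequality $(a^{p-1}-b^{p-1})\bigl((a-k)_+-(b-k)_+\bigr)\ge\bigl|(a-k)_+-(b-k)_+\bigr|^p$ (true for $p\ge 2$ by superadditivity of $t\mapsto t^{p-1}$ on $[0,\infty)$, after reducing to $a\ge b$ and splitting according to the position of $b$ relative to $k$ and $0$), the splitting of $A_k$ at level $2k$ with absorption of the critical piece through the smallness of $\int_{\{|u|\ge K\}}|u|^{\p}$, and the recursion $z_{n+1}\le C\,B^n z_n^{(\p-1)/(p-1)}$ in which the target level $M=2\max\{1,K\}$ cancels, so the threshold for $z_0$ depends only on $N,p,s,C_0$; the double role of $\eps_0$ (closing the absorption and forcing $z_0\le T_0$) is handled correctly, as is the reduction of the lower bound to $-u$ with $-f(x,-t)$. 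Notably, your argument yields the cleaner conclusion $\|u\|_\infty\le 2\max\{1,K\}$, independent of $\|u\|_{s,p}$, which is strictly stronger than the quoted estimate; this is not a contradiction (the paper's remark on non-uniformity in $\|u\|_{s,p}$ refers to the cited bound, and your constant is admissible in the statement), but it is a genuine difference between the two approaches. Two small caveats: your elementary inequality truly requires $p\ge 2$, so your proof covers only the degenerate range — harmless here, since the whole paper and every application of the lemma assume $p\ge 2$, whereas the cited result is not so restricted — and in passing from $\|w_k\|_{\p}^p\le c_0 k^{\p-1}\|w_k\|_{\p}|A_k|^{1-1/\p}$ to $\|w_k\|_{\p}^{p-1}\le c_0 k^{\p-1}|A_k|^{1-1/\p}$ one should note the trivial case $\|w_k\|_{\p}=0$ separately.
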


\noindent
The bound in Lemma \ref{apb} is not uniform in $\|u\|_{s, p}$, due to the critical growth in \eqref{gro}, therefore in order to prove equi-boundedness of a sequence $(u_{n})_{n}$ of solutions to \eqref{dir} one not only needs an {\em a priori} bound on $\|u_{n}\|_{s, p}$, but also an equi-integrability estimate. For strictly subcritical reactions the dependance on $K$ can be dropped, and the former is sufficient.
\vskip2pt
\noindent
In addition to $C^0_s(\overline\Omega)$ defined in the Introduction, we will also use the following weighted H\"older space (see \cite{IMS} for details):
\[C^\alpha_s(\overline\Omega) = \Big\{u\in C^0(\overline\Omega):\,\frac{u}{\ds}\in C^\alpha(\overline\Omega)\Big\}, \qquad (\alpha\in(0,1)),\]
with norm
\[\|u\|_{C^{\alpha}_{s}}=\|u\|_{C^{0}_{s}}+\sup_{x\neq y}\frac{|u(x)/\ds(x)-u(y)/\ds(y)|}{|x-y|^\alpha}.\]
The embedding $C^\alpha_s(\overline\Omega)\hookrightarrow C^{0}_{s}(\overline\Omega)$ is compact for all $\alpha\in(0,1)$. The space $C^\alpha_s(\overline\Omega)$ is related to the global regularity theory for solutions of the following problem:
\beq\label{dirg}
\begin{cases}
\fpl u=g(x) & \text{in $\Omega$} \\
u=0 & \text{in $\R^N\setminus\Omega$,}
\end{cases}
\eeq
with $g\in L^\infty(\Omega)$. Weak solutions are defined just as those of \eqref{dir}. From \cite[Theorem 1.1]{IMS3} we have the following result:

\begin{lemma}\label{reg}
Let $p\ge 2$. Then, there exist $\alpha,C>0$, both depending on $\Omega$, $p$, and $s$, such that any weak solution $u\in W^{s, p}_0(\Omega)$ of \eqref{dirg} with $g\in L^\infty(\Omega)$ fulfills
\[\|u\|_{C^{\alpha}_{s}}\le C\,\|g\|_\infty^\frac{1}{p-1}.\]
\end{lemma}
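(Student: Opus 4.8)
\textbf{Proof proposal for Lemma \ref{reg}.} The plan is to combine an $L^\infty$ bound with the boundary regularity theory for the fractional $p$-Laplacian, using scaling to track the precise dependence on $\|g\|_\infty$. First I would reduce to the normalized case $\|g\|_\infty\le 1$. Indeed, if $u$ solves \eqref{dirg} with datum $g$, then for $\lambda>0$ the rescaled function $u_\lambda:=\lambda^{-1}u$ solves $\fpl u_\lambda=\lambda^{-(p-1)}g$ in $\Omega$ with the same exterior condition, because $\fpl$ is positively $(p-1)$-homogeneous. Choosing $\lambda=\|g\|_\infty^{1/(p-1)}$ gives a solution $u_\lambda$ with right-hand side bounded by $1$ in $L^\infty(\Omega)$, and the claimed estimate $\|u\|_{C^\alpha_s}\le C\|g\|_\infty^{1/(p-1)}$ is then equivalent to the uniform bound $\|u_\lambda\|_{C^\alpha_s}\le C$ for all such normalized solutions.

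Next I would establish the a priori $L^\infty$ bound for normalized solutions: there is $C=C(N,p,s,\Omega)$ with $\|u_\lambda\|_\infty\le C$. Since the datum is bounded and hence strictly subcritical (a constant obeys \eqref{gro} with any exponent), this follows from Lemma \ref{apb} — as noted in the text, for subcritical reactions the dependence on $K$ drops and an a priori bound on the Sobolev norm suffices; and the Sobolev norm itself is controlled by testing the equation against $u_\lambda$ and using the fractional Sobolev and Hölder inequalities, which bounds $\|u_\lambda\|_{s,p}^p\le\|g\|_\infty\|u_\lambda\|_1\le C\|u_\lambda\|_{s,p}$, hence $\|u_\lambda\|_{s,p}\le C$. (Alternatively one may barrier $u_\lambda$ directly against a multiple of $\ds$ near $\partial\Omega$ together with an interior comparison, but quoting Lemma \ref{apb} is cleaner.)

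The core step is then to upgrade the $L^\infty$ bound to a $C^\alpha_s(\overline\Omega)$ bound. This is precisely the content of the global (interior plus boundary) Hölder regularity theory for the degenerate fractional $p$-Laplacian developed in \cite{IMS3}: there exist $\alpha=\alpha(N,p,s)\in(0,1)$ and $C=C(N,p,s,\Omega)$ such that any weak solution $v\in W^{s,p}_0(\Omega)$ of $\fpl v=h$ in $\Omega$ with $v=0$ in $\R^N\setminus\Omega$ and $\|h\|_\infty\le 1$ satisfies $\|v\|_{C^\alpha_s(\overline\Omega)}\le C$. I would apply this to $v=u_\lambda$, $h=\lambda^{-(p-1)}g$, noting $\|h\|_\infty\le 1$ by the choice of $\lambda$. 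Interior $C^\alpha$ bounds away from $\partial\Omega$ come from the De Giorgi–Nash–Moser type machinery for $\fpl$; the boundary part — the fact that $u_\lambda/\ds$ is Hölder up to $\partial\Omega$ — rests on comparison with the explicit torsion-type barrier $\ds$ (whose fractional $p$-Laplacian is bounded near $\partial\Omega$ since $\Omega$ has $C^{1,1}$-boundary) and a dyadic iteration near boundary points. This boundary regularity is the main obstacle and the reason $p\ge 2$ is assumed; since the statement explicitly cites \cite[Theorem 1.1]{IMS3}, I would invoke it as a black box.

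Finally I would undo the scaling: $u=\lambda u_\lambda$ with $\lambda=\|g\|_\infty^{1/(p-1)}$ and $\|u_\lambda\|_{C^\alpha_s}\le C$ yield $\|u\|_{C^\alpha_s}=\lambda\|u_\lambda\|_{C^\alpha_s}\le C\|g\|_\infty^{1/(p-1)}$, which is exactly the asserted bound, with $\alpha$ and $C$ depending only on $\Omega$, $p$, $s$. One should check that if $g\equiv 0$ then $u\equiv 0$ (by testing against $u$), so the case $\|g\|_\infty=0$ is trivial and the scaling argument only needs $\|g\|_\infty>0$.
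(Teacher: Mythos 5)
Your proposal is correct and in substance coincides with the paper's treatment: the paper gives no proof of Lemma \ref{reg} at all, stating it as a direct consequence of \cite[Theorem 1.1]{IMS3}, which is exactly the black box you invoke. Your additional bookkeeping — the $(p-1)$-homogeneity rescaling to normalize $\|g\|_\infty$, the $L^\infty$ and $W^{s,p}_0$ a priori bounds, and undoing the scaling to recover the $\|g\|_\infty^{1/(p-1)}$ dependence — is sound and standard, and merely reproduces explicitly the dependence already built into the cited theorem.
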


\noindent
Another useful tool for our argument is the following monotonicity property of the fractional $p$-Laplacian, which we present separately in the degenerate and singular case.

\begin{lemma}\label{mon}
{\rm (degenerate case)} Let $p\ge 2$. There exists $C=C(p)>0$ such that for all $u,v\in W^{s, p}_{0}(\Omega)\cap L^\infty(\Omega)$ and all $q\ge 1$
\[\Big\|(u-v)^\frac{p+q-1}{p}\Big\|_{s, p}^p \le C\,q^{p-1}\,\langle\fpl u-\fpl v,(u-v)^q\rangle.\]
\end{lemma}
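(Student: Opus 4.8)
The plan is to derive the inequality from an elementary pointwise estimate, integrated against $d\mu$. Write $w=u-v$ and $\beta=\tfrac{p+q-1}{p}$, so that $\beta\ge 1$ (because $p\ge 2$, $q\ge 1$) and, with the convention $a^\gamma=|a|^{\gamma-1}a$, one has $(u-v)^{\frac{p+q-1}{p}}=w^\beta$. Since $u,v\in W^{s,p}_0(\Omega)\cap L^\infty(\Omega)$, so is $w$; and because $t\mapsto t^\gamma$ is Lipschitz on $[-\|w\|_\infty,\|w\|_\infty]$ with constant $\gamma\|w\|_\infty^{\gamma-1}$ for every $\gamma\ge1$, both $w^\beta$ and $w^q$ lie in $W^{s,p}_0(\Omega)\cap L^\infty(\Omega)$. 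In particular $w^q$ is an admissible test function, each of the integrals $\langle\fpl u,w^q\rangle$ and $\langle\fpl v,w^q\rangle$ converges absolutely by H\"older's inequality, and
\[
\langle\fpl u-\fpl v,w^q\rangle=\iint_{\R^{2N}}\big[(u(x)-u(y))^{p-1}-(v(x)-v(y))^{p-1}\big]\big(w(x)^q-w(y)^q\big)\,d\mu,
\]
while the left-hand side of the Lemma equals $\iint_{\R^{2N}}|w(x)^\beta-w(y)^\beta|^p\,d\mu$. Putting $a=u(x)-u(y)$, $b=v(x)-v(y)$, $c=w(x)$, $d=w(y)$, one has the identity $a-b=c-d$, so everything reduces to the pointwise inequality
\[
|c^\beta-d^\beta|^p\le C(p)\,q^{p-1}\,(a^{p-1}-b^{p-1})(c^q-d^q)\qquad\text{whenever }a-b=c-d.
\]

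I would prove this in two stages. First, since $t\mapsto t^{p-1}$ and $t\mapsto t^q$ are increasing, the quantities $a^{p-1}-b^{p-1}$, $c^q-d^q$ and $a-b=c-d$ all have the same sign, so the right-hand side is nonnegative; combining the standard monotonicity bound $(a^{p-1}-b^{p-1})(a-b)\ge 2^{2-p}|a-b|^p$ (valid for $p\ge 2$) with $a-b=c-d$ yields
\[
(a^{p-1}-b^{p-1})(c^q-d^q)\ge 2^{2-p}\,|c-d|^{p-1}\,|c^q-d^q|,
\]
which eliminates $a$ and $b$. It then suffices to prove the purely two-variable estimate $|c^\beta-d^\beta|^p\le C(p)\,q^{p-1}\,|c-d|^{p-1}\,|c^q-d^q|$. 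Both sides are even, symmetric in $c\leftrightarrow d$, and positively homogeneous of degree $p+q-1$, so we may assume $c=1$ and $d\in[-1,1]$. For $d\in[0,1]$ the elementary facts $1-d^\beta\le\beta(1-d)$ and $d^\beta\ge d^q$ (the latter because $1\le\beta\le q$) give
\[
(1-d^\beta)^p=(1-d^\beta)^{p-1}(1-d^\beta)\le\beta^{p-1}(1-d)^{p-1}(1-d^\beta)\le q^{p-1}(1-d)^{p-1}(1-d^q);
\]
for $d\in[-1,0]$ the three factors $1-d^\beta=1+|d|^\beta$, $1-d=1+|d|$ and $1-d^q=1+|d|^q$ all lie in $[1,2]$, so the estimate holds with $C(p)=2^p$.

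Integrating the pointwise inequality in $(x,y)$ over $\R^{2N}$ against $d\mu$ then gives the Lemma with, say, $C(p)=2^{2p-2}$. The only genuinely delicate point is the two-variable estimate: it is easy to lose a power of $q$ and arrive at $q^p$ in place of $q^{p-1}$, and avoiding this is precisely what forces the splitting $(1-d^\beta)^p=(1-d^\beta)^{p-1}(1-d^\beta)$, applying the linear bound $1-d^\beta\le\beta(1-d)$ only to the first factor and the comparison $d^\beta\ge d^q$ to the second. Everything else is routine.
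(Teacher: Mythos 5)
Your argument is correct and follows the same overall blueprint as the paper's: rewrite both sides as double integrals against $d\mu$, reduce to a pointwise four-variable inequality under the constraint $a-b=c-d$, and use the $p\ge 2$ monotonicity estimate (your $(a^{p-1}-b^{p-1})(a-b)\ge 2^{2-p}|a-b|^{p}$, equivalently the paper's $c^{p-1}-d^{p-1}\ge 2^{2-p}(c-d)^{p-1}$) to trade increments of $u$ and $v$ for increments of $u-v$. The genuine difference lies in how the remaining two-variable power inequality $|c^\beta-d^\beta|^p\le C\,q^{p-1}|c-d|^{p-1}|c^q-d^q|$, with $\beta=\tfrac{p+q-1}{p}$, is obtained. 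The paper invokes \cite[Lemma A.2]{BP}, an abstract estimate of the shape $|G(a)-G(b)|^p\le|a-b|^{p-1}|g(a)-g(b)|$ for $G(t)=\int_0^t(g'(\tau))^{1/p}\,d\tau$, specialized to $g(t)=t^q$; you instead prove precisely this special case from scratch by normalizing via homogeneity, symmetry and evenness to $c=1$, $d\in[-1,1]$, and then splitting $(1-d^\beta)^p=(1-d^\beta)^{p-1}(1-d^\beta)$ so that the linear bound $1-d^\beta\le\beta(1-d)$ is applied only $p-1$ times while $d^\beta\ge d^q$ handles the last factor --- exactly the device that delivers $q^{p-1}$ rather than $q^{p}$, as you correctly flag. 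Your route is fully self-contained and elementary, at the cost of being tied to the power nonlinearity; the paper's citation is shorter and transparently covers more general $g$. Both give constants of comparable quality, and both are valid.
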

\begin{proof}
First we prove the following elementary inequality: for all $a,b,c,d\in\R$ such that $a-b=c-d$ it holds
\beq\label{abcd}
\Big|a^\frac{p+q-1}{p}-b^\frac{p+q-1}{p}\Big|^{p} \le C\,q^{p-1}\,\big(c^{p-1}-d^{p-1}\big)\,\big(a^q-b^q\big),
\eeq
with a constant $C=C(p)>0$ independent of $q$. We may assume that $a\ge b$ and $c\ge d$ since the former is equivalent to the latter, so that being $t\mapsto t^{r-1}$ increasing for all $r\ge 1$ all the factors of \eqref{abcd} are nonnegative. We apply \cite[Lemma A.2]{BP} with $g(t)=t^q$ and
\[G(t) = \int_0^t (g'(\tau))^\frac{1}{p}\, d\tau = \frac{p\,q^\frac{1}{p}}{p+q-1}t^\frac{p+q-1}{p}\]
to get
\[\Big(a^\frac{p+q-1}{p}-b^\frac{p+q-1}{p}\Big)^{p} \le \frac{(p+q-1)^p}{p^p\,q}\,\big(a-b\big)^{p-1}\,\big(a^q-b^q\big).\]
Besides, $p\ge 2$ implies that $t\mapsto t^{p-2}$ is increasing on $\R_{+}$, hence
\begin{align*}
c^{p-1}-d^{p-1} &= (p-1)\,\int_{d}^{c}|t|^{p-2}\,dt \\
&\ge (p-1)\,\int_{-(c-d)/2}^{(c-d)/2}|t|^{p-2}\,dt \\
&= \frac{1}{2^{p-2}}\,\big(c-d\big)^{p-1}.
\end{align*}
Recalling that $a-b=c-d$ and concatenating with the previous inequality, we get
\[\Big(a^\frac{p+q-1}{p}-b^\frac{p+q-1}{p}\Big)^{p} \le \frac{2^{p-2}(p+q-1)^p}{p^p\,q}\,\big(c^{p-1}-d^{p-1}\big)\,\big(a^q-b^q\big),\]
which yields \eqref{abcd}. Now pick $u,v\in W^{s,p}_{0}(\Omega)\cap L^\infty(\Omega)$, so $(u-v)^q\in W^{s, p}_0(\Omega)$. By \eqref{abcd} with $a=u(x)-v(x)$, $b=u(y)-v(y)$, $c=u(x)-u(y)$ and $d=v(x)-v(y)$, we have
\begin{align*}
\Big\|(u-v)^\frac{p+q-1}{p}\Big\|_{s,p}^p &= \iint_{\R^{2N}}\Big|(u(x)-v(x))^\frac{p+q-1}{p}-(u(y)-v(y))^\frac{p+q-1}{p}\Big|^p\,d\mu \\
&\le C\,q^{p-1}\,\langle\fpl u-\fpl v,(u-v)^q\rangle,
\end{align*}
which proves the assertion.
\end{proof}

\noindent
In the singular case, the monotonicity is slightly different:

\begin{lemma}\label{smon}
{\rm (singular case)} Let $p\in (1,2)$. Then, there exists $C=C(p)>0$ such that for all $u,v\in W^{s, p}_0(\Omega)\cap L^\infty(\Omega)$ and all $q\ge 1$
\[\frac{\big\|(u-v)^\frac{q+1}{2}\big\|_{s, p}^2}{\big(\|u\|_{s,p}^p+\|v\|_{s,p}^p\big)^{2-p}} \le C\, q \,\langle\fpl u-\fpl v,(u-v)^q\rangle.\]
\end{lemma}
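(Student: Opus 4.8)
The plan is to mimic the strategy of Lemma \ref{mon}, reducing everything to a pointwise algebraic inequality and then integrating against $d\mu$, but with the crucial difference that the vector inequality $(c^{p-1}-d^{p-1})(c-d)\geq C\,|c-d|^p$ \emph{fails} for $p<2$; instead one only has the weaker quantitative monotonicity $(c^{p-1}-d^{p-1})(c-d)\geq C\,(|c|+|d|)^{p-2}|c-d|^2$, which is exactly what forces the appearance of the normalizing factor $(\|u\|_{s,p}^p+\|v\|_{s,p}^p)^{2-p}$. So the first step is to establish, for $a-b=c-d$ with (WLOG, by the usual reduction) $a\geq b$ and $c\geq d$, the pointwise estimate
\[
\Big(a^\frac{q+1}{2}-b^\frac{q+1}{2}\Big)^2 \le C\,q\,\big(c^{p-1}-d^{p-1}\big)\big(a^q-b^q\big)\,\big(|c|+|d|\big)^{2-p}.
\]
Here the left side is handled by \cite[Lemma A.2]{BP} applied with $g(t)=t^q$ and $G(t)=\int_0^t(g'(\tau))^{1/2}\,d\tau=\tfrac{2q^{1/2}}{q+1}\,t^{(q+1)/2}$, giving $(a^{(q+1)/2}-b^{(q+1)/2})^2\le C\,q^{-1}\,(a-b)(a^q-b^q)$ — note the exponent on $(a-b)$ is now $1$, not $p-1$ — and then $a-b=c-d$ together with the singular monotonicity inequality above converts $(c-d)$ into $(c^{p-1}-d^{p-1})(|c|+|d|)^{2-p}$ up to a constant.

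The second step is the integration. Choosing $a=u(x)-v(x)$, $b=u(y)-v(y)$, $c=u(x)-u(y)$, $d=v(x)-v(y)$ and integrating the pointwise inequality against $d\mu$ yields on the left $\|(u-v)^{(q+1)/2}\|_{s,p}^2$ (using $|c|+|d|\geq|c-d|=|a-b|$... — actually one integrates before estimating, keeping the $(|c|+|d|)^{2-p}$ weight), and on the right an integral of the form
\[
C\,q\iint_{\R^{2N}}\big(c^{p-1}-d^{p-1}\big)\big(a^q-b^q\big)\,\frac{(|c|+|d|)^{2-p}}{|x-y|^{N+ps}}\,dx\,dy.
\]
Since $0<2-p<1$, I would apply Hölder's inequality in the pair $(x,y)$ with exponents $1/(p-1)$ (... more precisely, with the conjugate pair matched so that the factor $c^{p-1}-d^{p-1}$, which is bounded by $(p-1)|c-d|^{p-1}$, combines with $(|c|+|d|)^{2-p}$ to reproduce a term controlled by $\langle\fpl u-\fpl v,(u-v)^q\rangle$, while the leftover factor $(|c|+|d|)^{2-p}$ — distributed as $|u(x)-u(y)|^{2-p}+|v(x)-v(y)|^{2-p}$ by convexity — integrates, again via Hölder with exponents $\tfrac{p}{2-p}$ and $\tfrac{p}{2(p-1)}$ against $d\mu$, to $(\|u\|_{s,p}^p+\|v\|_{s,p}^p)^{2-p}$). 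Concretely: split $(|c|+|d|)^{2-p}\le C(|c|^{2-p}+|d|^{2-p})$, write the integrand as the product of $\big[(c^{p-1}-d^{p-1})(a^q-b^q)\big]$ and $(|u(x)-u(y)|^{2-p}+|v(x)-v(y)|^{2-p})$ against $d\mu$, and apply Hölder with exponents $\tfrac{p}{2(p-1)}$ and $\tfrac{p}{2-p}$; the first factor then appears to the power $\tfrac{p}{2(p-1)}$ under the integral, which is still comparable to $\langle\fpl u-\fpl v,(u-v)^q\rangle$ only if that exponent equals $1$, i.e.\ only if $p=\tfrac32$, so this naive splitting is too crude.

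The correct and cleaner route, which is the real content, is to keep $c^{p-1}-d^{p-1}$ intact (it is nonnegative and its $d\mu$-integral against $a^q-b^q$ is precisely $\langle\fpl u-\fpl v,(u-v)^q\rangle$) and treat $(|c|+|d|)^{2-p}$ as the Hölder companion with the single exponent $\tfrac{p}{2-p}$ on that factor and $\tfrac{p}{2(p-1)}$ on the rest — but then one uses that $(c^{p-1}-d^{p-1})(a^q-b^q)\ge 0$ has a \emph{product} structure only after the pointwise step, so instead I would interpolate: by Hölder with exponents $2/p$ and $2/(2-p)$ applied to $(c^{p-1}-d^{p-1})^{p/2}\cdot\big[(c^{p-1}-d^{p-1})(|c|+|d|)^{2-p}\big]^{?}$... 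The main obstacle, and the step I expect to occupy most of the proof, is precisely organizing this Hölder interpolation so that one factor reconstructs $\langle\fpl u-\fpl v,(u-v)^q\rangle$ to the first power and the complementary factor reconstructs $\|u\|_{s,p}^p+\|v\|_{s,p}^p$ to the power $2-p$, with a $q$-linear (not $q^{p-1}$) constant; the exponent $2-p$ in the denominator of the statement and the linear power of $q$ are exactly the bookkeeping output of getting this balance right. Everything else — the WLOG sign reduction, the citation of \cite[Lemma A.2]{BP}, the fact that $(u-v)^q\in W^{s,p}_0(\Omega)$ for bounded $u,v$ — is routine and parallels Lemma \ref{mon}.
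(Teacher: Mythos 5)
The pointwise algebraic inequality you aim for is essentially the right one (the paper uses $(c^2+d^2)^{\frac{2-p}{2}}$ where you write $(|c|+|d|)^{2-p}$, but these are comparable up to a constant, so that is immaterial), and the two ingredients — a Cauchy--Schwarz type bound for $\big(a^{\frac{q+1}{2}}-b^{\frac{q+1}{2}}\big)^2$ in terms of $(a-b)(a^q-b^q)$, then converting $c-d$ into $(c^{p-1}-d^{p-1})(|c|+|d|)^{2-p}$ using $p<2$ — are also right. (One sign slip: the $q$-dependence from the first step is $\frac{(q+1)^2}{4q}\sim q$, i.e.\ $Cq$, not $Cq^{-1}$ as you wrote; the paper gets this by writing $a^{\frac{q+1}{2}}-b^{\frac{q+1}{2}}=\frac{q+1}{2}\int_b^a|t|^{\frac{q-1}{2}}\,dt$ and applying Cauchy--Schwarz, rather than invoking \cite[Lemma A.2]{BP}, but these are the same computation.)

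The genuine gap is in the integration step, and you correctly sense you are stuck there. Your confusion comes from trying to integrate the pointwise bound on $\big|a^{\frac{q+1}{2}}-b^{\frac{q+1}{2}}\big|^2$ directly against $d\mu$ and claiming this produces $\|(u-v)^{\frac{q+1}{2}}\|_{s,p}^2$: it does not, because the Gagliardo seminorm $\|w\|_{s,p}$ is built from $\iint |w(x)-w(y)|^p\,d\mu$, i.e.\ from the $p$-th power of the increment, not the square. The correct and short route is: raise the pointwise inequality
\[
\Big|a^\frac{q+1}{2}-b^\frac{q+1}{2}\Big|^2 \le C\, q\,\big(c^{p-1}-d^{p-1}\big)\big(a^q-b^q\big)\big(c^2+d^2\big)^\frac{2-p}{2}
\]
to the power $p/2$ so that the left-hand side becomes $\big|a^\frac{q+1}{2}-b^\frac{q+1}{2}\big|^p$, then integrate against $d\mu$ to obtain $\big\|(u-v)^{\frac{q+1}{2}}\big\|_{s,p}^p$ on the left. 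On the right you now have $(Cq)^{p/2}\iint\big[(c^{p-1}-d^{p-1})(a^q-b^q)\big]^{p/2}\big(c^2+d^2\big)^{\frac{p(2-p)}{4}}\,d\mu$, and a single application of H\"older with conjugate exponents $2/p$ and $2/(2-p)$ — the pair you named, but applied to this integrand — splits this cleanly into $\big(\langle\fpl u-\fpl v,(u-v)^q\rangle\big)^{p/2}$ times $\big(\iint(c^2+d^2)^{p/2}\,d\mu\big)^{\frac{2-p}{2}}$; the latter is $\le \big(\|u\|_{s,p}^p+\|v\|_{s,p}^p\big)^{\frac{2-p}{2}}$ since $(c^2+d^2)^{p/2}\le|c|^p+|d|^p$ for $p\le 2$. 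Raising the whole chain to the power $2/p$ gives the statement. No splitting of $(|c|+|d|)^{2-p}$, no interpolation acrobatics, no second H\"older — the missing idea was simply to pass to the $p$-th power \emph{before} integrating.
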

\begin{proof}
Again we start with an elementary inequality:
\beq\label{sing}
\Big|a^\frac{q+1}{2}-b^\frac{q+1}{2}\Big|^2 \le C\, q\,\big(c^{p-1}-d^{p-1}\big)\,(a^q-b^q\big)\big(c^2+d^2\big)^\frac{2-p}{2},
\eeq
for all $a,b,c,d\in\R$ such that $a-b=c-d$, with a constant $C=C(p)>0$ independent of $q$. As in the previous proof, we may assume that $a\ge b$ and $c\ge d$. 
By the Cauchy-Schwartz inequality and the assumption $a-b=c-d$, we have
\begin{equation}\label{pla}
\begin{split}
\Big[a^\frac{q+1}{2}-b^\frac{q+1}{2}\Big]^2 &= \Big[\frac{q+1}{2}\,\int_{b}^{a}|t|^{\frac{q-1}{2}}\, dt\Big]^{2} \\
&\le \frac{(q+1)^{2}}{4}\,\int_{b}^{a}|t|^{q-1}\, dt\, (a-b) \\
&= \frac{(q+1)^{2}}{4 \, q}\,\big(a^{q}-b^{q}\big)\,(c-d).
\end{split}
\end{equation}
On the other hand, $|t|\le \big(c^2+d^2\big)^\frac{1}{2}$ for all $t\in [d, c]$, which, along with $p<2$, implies for all $t\in[d,c]$
\[|t|^{2-p}\le  \big(c^2+d^2\big)^\frac{2-p}{2}.\]
In turn, the latter implies
\begin{align*}
c-d &= \int_{d}^{c}|t|^{2-p}\,|t|^{p-2}\, dt \\
&\le \big(c^2+d^2\big)^\frac{2-p}{2}\,\int_{d}^{c}|t|^{p-2}\, dt \\
&= \frac{1}{p-1}\,\big(c^2+d^2\big)^\frac{2-p}{2}\,\big(c^{p-1}-d^{p-1}\big),
\end{align*}
which inserted into \eqref{pla} gives \eqref{sing}. Now pick $u,v\in W^{s, p}_0(\Omega)\cap L^\infty(\Omega)$ and set, for any $x, y\in \R^{N}$, $a=u(x)-v(x)$, $b=u(y)-v(y)$, $c=u(x)-u(y)$ and $d=v(x)-v(y)$. Using \eqref{sing} and H\"older's inequality with exponents $2/p$ and $2/(2-p)$, we get
\begin{align*}
&\big\|(u-v)^{\frac{q+1}{2}}\big\|_{s, p}^p = \iint_{\R^{2N}}\big|(u(x)-v(x))^\frac{q+1}{2}-(u(y)-v(y))^\frac{q+1}{2}\big|^p\,d\mu \\
&\le C\, q^{\frac{p}{2}}\iint_{\R^{2N}}\hspace{-2pt}\Big[\frac{\big((u(x)-u(y))^{p-1}\hspace{-2pt}-(v(x)-v(y))^{p-1}\big)\big((u(x)-v(x))^q-(u(y)-v(y))^q\big)}{\big((u(x)-u(y))^2+(v(x)-v(y))^2\big)^\frac{p-2}{2}}\Big]^\frac{p}{2}d\mu \\
&\le C\, q^{\frac{p}{2}}\,\big(\langle\fpl u-\fpl v,(u-v)^q\rangle\big)^\frac{p}{2}\big(\|u\|_{s,p}^p+\|v\|_{s,p}^p\big)^\frac{2-p}{2},
\end{align*}
with a different $C=C(p)>0$, still independent of $q$. Raising to the power $2/p$ we conclude.
\end{proof}

\section{Proof of the main result}\label{sec3}

\noindent
In this section we prove our main result:

\begin{proof}[Proof of Theorem \ref{svh}]
First we prove that \ref{svh1} implies \ref{svh2}. Assuming \ref{svh1}, we have in particular $J'(u_0)=0$ in $W^{-s, p'}(\Omega)$, hence by Lemma \ref{apb} $u_0\in L^\infty(\Omega)$. In turn, by \eqref{gro} we have $f(\cdot,u)\in L^\infty(\Omega)$. Then, Lemma \ref{reg} implies $u\in C^{0}_{s}(\overline\Omega)$.
\vskip2pt
\noindent
We argue by contradiction, assuming that there exists a sequence $(u_n)_{n}$ in $ W^{s, p}_0(\Omega)\cap C^{0}_{s}(\overline\Omega)$ such that $u_n\to u_0$ in $C^{0}_{s}(\overline\Omega)$ and $J(u_n)<J(u_0)$ for all $n\in\N$. Then we have $u_n\to u_0$ in $L^\infty(\Omega)$, hence
\[\lim_n\int_\Omega F(x,u_n)\,dx = \int_\Omega F(x,u_0)\,dx.\]
So we have
\begin{align*}
\limsup_n\frac{\|u_{n}\|_{s,p}^p}{p} &= \limsup_n \Big[J(u_n)+\int_\Omega F(x,u_n)\,dx\Big] \\
&\le J(u_0)+\int_\Omega F(x,u_0)\,dx = \frac{\|u_0\|_{s,p}^p}{p},
\end{align*}
in particular $(u_n)$ is bounded in $ W^{s, p}_0(\Omega)$. Passing to a subsequence, we have $u_n\rightharpoonup u_0$ in $ W^{s, p}_0(\Omega)$, hence
\[\|u_0\|_{s,p}\le\liminf_n\|u_n\|_{s,p}.\]
By the uniform convexity of $W^{s,p}_{0}(\Omega)$  it is easily seen that the latter implies $u_n\to u_0$ (strongly) in $ W^{s, p}_0(\Omega)$. Then, for all $n\in\N$ big enough we have $\|u_n-u_0\|_{s,p}\le\rho$ along with $J(u_n)<J(u_0)$, a contradiction. Thus, \ref{svh2} holds.
\vskip2pt
\noindent
Now we prove that \ref{svh2} implies \ref{svh1}. First note that, by \ref{svh2}, for all $\varphi\in W^{s, p}_0(\Omega)\cap C^{0}_{s}(\overline\Omega)$ we have
\[\langle J'(u_0),\varphi\rangle\ge 0.\]
Since $ W^{s, p}_0(\Omega)\cap C^{0}_{s}(\overline\Omega)$ is a dense subspace of $ W^{s, p}_0(\Omega)$, we have $J'(u_0)=0$ in $W^{-s, p'}(\Omega)$. As above, using Lemmas \ref{apb} and \ref{reg} we deduce that $u_0\in C^{0}_{s}(\overline\Omega)$, in particular $u_0\in L^\infty(\Omega)$. Again we argue by contradiction, assuming that there exists a sequence $(\tilde u_n)_{n}$ in $ W^{s, p}_0(\Omega)$ such that $\tilde u_n\to u_0$ in $ W^{s, p}_0(\Omega)$ and $J(\tilde u_n)<J(u_0)$ for all $n\in\N$. Set for all $n\in\N$
\[\delta_n:=\|\tilde u_n-u_0\|_{\p}, \qquad B_n=\big\{u\in W^{s, p}_0(\Omega):\,\|u-u_0\|_{\p}\le\delta_n\big\}.\]
By the continuous embedding $ W^{s, p}_0(\Omega)\hookrightarrow L^{\p}(\Omega)$ we have $\delta_n\to 0$, and $B_n$ is a closed convex (hence, weakly closed) subset of $ W^{s, p}_0(\Omega)$. Due to the critical growth in \eqref{gro}, we cannot directly minimize $J$ over $B_n$, so we introduce a suitable truncation. Set for all $t\in\R$, $\kappa>0$
\[[t]_\kappa={\rm sign}(t)\min\{|t|,\kappa\}.\]
For all $u\in W^{s, p}_0(\Omega)$ we have by dominated convergence
\beq\label{appr}
\lim_{\kappa\to +\infty}\int_\Omega\int_0^u f(x,[t]_\kappa)\,dt\,dx = \int_\Omega F(x,u)\,dx.
\eeq
Fix $n\in\N$, $\eps_{n}\in(0,J(u_0)-J(\tilde u_n))$. By \eqref{appr} we can find $\kappa_n>\|u_0\|_\infty+1$ such that
\[\Big|\int_\Omega F_n(x,\tilde u_n)\,dx-\int_\Omega F(x,\tilde u_n)\,dx\Big| < \eps_{n},\]
where we have set
\[f_n(x,t):=f(x,[t]_{\kappa_n}), \qquad F_n(x,t)=\int_0^t f_n(x,\tau)\,d\tau.\]
Note that, by \eqref{gro},
\[|f_n(x,t)| \le C_n:=C_{0}\,(1+\kappa_{n}^{\p-1}).\]
Set for all $u\in W^{s, p}_0(\Omega)$
\[J_n(u)=\frac{\|u\|_{s,p}^p}{p}-\int_\Omega F_n(x,u)\,dx,\]
hence $J_n\in C^1( W^{s, p}_0(\Omega))$, is sequentially weakly lower semi-continuous and, being $p>1$ and $|F_{n}(x, t)|\le C_{n}(1+|t|)$, it turns out to be coercive. Thus for any $n\ge 0$ we can find $u_{n}\in B_{n}$ solving the minimization problem
\beq\label{min}
J_n(u_n)=m_n=\inf_{u\in B_n}J_n(u).
\eeq
Notice that by the choice of $\eps_{n}$ and $\kappa_n$, we have
\beq
\label{jjn}
J_{n}(u_{n})\le J_n(\tilde u_n) \le J(\tilde u_n)+\eps_{n} < J(u_0) = J_n(u_0).
\eeq
We claim that there exists $\lambda_n\ge 0$ such that the following identity holds in $W^{-s, p'}(\Omega)$:
\beq\label{lag}
\fpl u_n+\lambda_n(u_n-u_0)^{\p-1} = f_n(x,u_n).
\eeq
Indeed, recalling that $u_n\in B_n$, two cases may occur:
\begin{itemize}
\item[$(a)$] If $\|u_n-u_0\|_{\p}<\delta_n$, then by \eqref{min} and the continuous embedding $ W^{s, p}_0(\Omega)\hookrightarrow L^{\p}(\Omega)$, $u_n$ is a local minimizer of $J_n$ in $ W^{s, p}_0(\Omega)$, hence $J'_n(u_n)=0$. So, \eqref{lag} holds with $\lambda_n=0$.
\item[$(b)$] If $\|u_n-u_0\|_{\p}=\delta_n$, then we apply Lagrange's multipliers rule. Set for all $u\in W^{s, p}_0(\Omega)$
\[I(u)=\frac{\|u-u_0\|_{\p}^{\p}}{\p}, \qquad \mathcal{M}_n=\Big\{u\in W^{s, p}_0(\Omega):\,I(u)=\frac{\delta_n^{\p}}{\p}\Big\},\]
then $I\in C^1( W^{s, p}_0(\Omega))$ and $\mathcal{M}_n$ is a $C^1$-manifold in $ W^{s, p}_0(\Omega)$. By \eqref{min}, $u_n$ is a global minimizer of $J_n$ on $\mathcal{M}_n$, so there exists $\lambda_n\in\R$ such that in $W^{-s, p'}(\Omega)$
\[J'_n(u_n)+\lambda_n I'(u_n) = 0,\]
which is equivalent to \eqref{lag}. Besides, by \eqref{min} again we have
\[\lambda_n = -\frac{\langle J'_n(u_n),u_0-u_n\rangle}{\langle I'(u_n),u_0-u_n\rangle} \ge 0.\]
\end{itemize}
By construction we have that $u_n\to u_0$ in $L^{\p}(\Omega)$, as $n\to\infty$. Moreover, by Lemma \ref{apb} and \eqref{lag}, we have $u_n\in L^\infty(\Omega)$. The next and most delicate step of the proof consists in proving that
\beq\label{uni}
u_n\to u_{0} \qquad \text{in $L^\infty(\Omega)$.}
\eeq
Adding \eqref{dir} and \eqref{lag}, for all $n\in\N$ we get in $W^{-s, p'}(\Omega)$
\beq\label{dif}
\fpl u_n-\fpl u_0+\lambda_n(u_n-u_0)^{\p-1} = g_n(x,u_n-u_0),
\eeq
where for all $(x,t)\in\Omega\times\R$ we have set
\[g_n(x,t) := f_n(x,t+u_0(x))-f(x,u_0(x)).\]
By \eqref{gro}, we can find $C>0$ (independent of $n$) such that for all $n\in\N$
\beq\label{difgro}
|g_n(x,t)| \le C\,(1+|t|^{\p-1}).
\eeq
We set $w_n=u_n-u_0\in W^{s, p}_0(\Omega)\cap L^\infty(\Omega)$ and test \eqref{dif} with $w_n^q\in W^{s, p}_0(\Omega)$, for $q\ge 1$:
\beq
\label{pqw}
\begin{split}
&\langle\fpl u_n-\fpl u_0,w_n^q\rangle+\lambda_n\int_\Omega|w_n|^{\p+q-1}\,dx = \int_\Omega g_n(x,w_n)\,w_n^q\,dx \\
&\le C\,\Big[\int_\Omega|w_n|^q\,dx+\int_\Omega|w_n|^{\p+q-1}\,dx\Big].
\end{split}
\eeq
We now apply Lemma \ref{mon} and the continuous embedding $ W^{s, p}_0(\Omega)\hookrightarrow L^{\p}(\Omega)$ to get
\begin{align*}
\Big[\int_\Omega |w_n|^\frac{\p(p+q-1)}{p}\,dx\Big]^\frac{p}{\p} &\le C\,\Big\|w_n^\frac{p+q-1}{p}\Big\|_{s,p}^p \\
&\le C\,q^{p-1}\,\langle\fpl u_n-\fpl u_0,w_n^q\rangle,
\end{align*}
which, along with $\lambda_n\ge 0$ and \eqref{pqw}, implies for all $n\in\N$, $q\ge 1$
\beq\label{qest}
\Big[\int_\Omega |w_n|^\frac{\p(p+q-1)}{p}\,dx\Big]^\frac{p}{\p} \le C\,q^{p-1}\,\Big[\int_\Omega|w_n|^q\,dx+\int_\Omega|w_n|^{\p+q-1}\,dx\Big],
\eeq
with $C>0$ independent of $q$ and $n$. Next, we shall derive an iterative formula from \eqref{qest}. We define recursively an increasing sequence $(q_j)_{j}$ by setting
\[q_1=1, \qquad q_{j+1}=\frac{\p (p+q_j-1)}{p},\]
so $q_j\to\infty$ as $j\to\infty$. In particular, we can find $\bar{\jmath}\in\N$ such that
\beq\label{qbar}
\bar q = q_{\bar{\jmath}} > \frac{N(\p-1)}{ps}.
\eeq
By \eqref{qest} with $q=q_j$, we have for all $j,n\in\N$
\[\Big[\int_\Omega |w_n|^{q_{j+1}}\,dx\Big]^\frac{p}{\p} \le C\,q_j^{p-1}\,\Big[\int_\Omega|w_n|^{q_j}\,dx+\int_\Omega|w_n|^{\p+q_j-1}\,dx\Big].\]
We aim at absorbing the last integral into the left hand side. By H\"older's inequality we have
\[\int_\Omega|w_n|^{\p+q_j-1}\,dx \le \Big[\int_\Omega|w_n|^{\p}\,dx\Big]^\frac{\p-p}{\p}\Big[\int_\Omega|w_n|^\frac{\p(p+q_j-1)}{p}\,dx\Big]^\frac{p}{\p}.\]
Since $w_n\to 0$ in $L^{\p}(\Omega)$, for all $n\in\N$ big enough we may assume
\[\Big[\int_\Omega|w_n|^{\p}\,dx\Big]^\frac{\p-p}{\p} \le \frac{1}{2\,C\,\bar q^{p-1}}.\]
So for all $j=1,\ldots\bar{\jmath}$ we have
\[\Big[\int_\Omega |w_n|^{q_{j+1}}\,dx\Big]^\frac{p}{\p} \le C\,q_j^{p-1}\,\int_\Omega|w_n|^{q_j}\,dx+\frac{1}{2}\,\Big[\int_\Omega|w_n|^{q_{j+1}}\,dx\Big]^\frac{p}{\p},\]
which yields
\beq\label{iter1}
\Big[\int_\Omega|w_n|^{q_{j+1}}\,dx\Big]^\frac{p}{\p} \le 2\,C\,q_j^{p-1}\,\int_\Omega|w_n|^{q_j}\,dx.
\eeq
Iterating on \eqref{iter1} for $j=1,\ldots\bar{\jmath}$, we find $\overline{C},\bar\alpha>0$ such that for all $n\in\N$ big enough
\[\|w_n\|_{\bar q} \le \overline{C}\,\|w_n\|_1^{\bar\alpha}.\]
Since $(w_n)_{n}$ is bounded in $L^1(\Omega)$, it is so in $L^{\bar q}(\Omega)$ as well. Now recall \eqref{qbar} and set
\[\bar r = \frac{\bar q}{\p-1} > \frac{N}{ps}.\]
By \eqref{difgro} and H\"older's inequality we have
\[\int_\Omega |g_n(x,w_n)|^{\bar r}\,dx \le C\int_\Omega(1+|w_n|^{\bar q})\,dx \le C.\]
 Again we test \eqref{dif} with $w_n^q\in W^{s, p}_0(\Omega)\cap L^{\infty}(\Omega)$, $q\ge 1$. As above, by using Lemma \ref{mon} we get
\begin{align}\label{est2}
\Big[\int_\Omega|w_n|^\frac{\p(p+q-1)}{p}\,dx\Big]^\frac{p}{\p} &\le C\,q^{p-1}\,\int_\Omega g_n(x,w_n)w_n^q\,dx \\
\nonumber &\le C\,q^{p-1}\,\Big[\int_\Omega |g_n(x,w_n)|^{\bar r}\,dx\Big]^\frac{1}{\bar r}\Big[\int_\Omega|w_n|^{q\bar r'}\,dx\Big]^\frac{1}{\bar r'} \\
\nonumber &\le C\,q^{p-1}\,\Big[\int_\Omega|w_n|^{q\bar r'}\,dx\Big]^\frac{1}{\bar r'},
\end{align}
with $C>1$ independent of $n$ and $q$. By \eqref{qbar} we may set
\[\gamma = \frac{\p}{p\bar r'} > 1,\]
and define recursively two sequences $(p_j)_{j}$, $(q_j)_{j}$ (different from the previous $(q_j)_j$) through
\[p_0 = \p, \qquad \ p_{j+1} = \gamma p_j+\frac{\p(p-1)}{p}, \qquad q_j = \frac{p_j}{\bar r'}.\]
So we have $p_j,q_j\to\infty$ as $j\to\infty$ and setting $q=q_j$ in \eqref{est2}, we have for all $n, j\in\N$
\[\Big[\int_\Omega |w_n|^{p_{j+1}}\,dx\Big]^\frac{p}{\p} \le C\,q_j^p\,\Big[\int_\Omega|w_n|^{p_j}\,dx\Big]^\frac{1}{\bar r'}\]
(recall that $q_j>1$, hence $q_j^{p-1}\le q_j^p$), which rephrases as the following recursive inequality:
\beq\label{rec}
\int_\Omega|w_n|^{p_{j+1}}\,dx \le (C\,q_j^p)^{\gamma \bar r'}\Big[\int_\Omega|w_n|^{p_j}\,dx\Big]^\gamma.
\eeq
Iterating on \eqref{rec} for $j\in\N$ and recalling that $q_j\sim\gamma^j/\bar r'$ as $j\to\infty$, we have
\begin{align*}
\int_\Omega|w_n|^{p_j}\,dx &\le \prod_{i=0}^{j-1}(C\,q_i^p)^{\gamma^{j-i}\bar r'}\,\Big[\int_\Omega|w_n|^{p_0}\,dx\Big]^{\gamma^j} \\
&\le C^{\gamma^j}\,\gamma^{p\bar r'\sum_{i=0}^{j-1} i\gamma^{j-i}}\Big[\int_\Omega|w_n|^{\p}\,dx\Big]^{\gamma^j},
\end{align*}
with an even bigger $C>1$ independent of $j$, $n$. Set
\[S=\sum_{i=0}^\infty i\,\gamma^{-i} < \infty,\]
then we have for all $n, j\in\N$
\[\int_\Omega|w_n|^{p_j}\,dx \le C^{\gamma^j}\,\gamma^{p\bar r'S\gamma^j}\,\Big[\int_\Omega|w_n|^{\p}\,dx\Big]^{\gamma^j}.\]
Note that, since $w_n\to 0$ in $L^{\p}(\Omega)$, the integral on the right hand side is less than $1$ for all $n\in\N$ big enough. Raising the last inequality to the power $1/p_j$, we get
\begin{align*}
\|w_n\|_{p_j} &\le C^\frac{\gamma^j}{p_j}\,\gamma^{p\bar r'S\frac{\gamma^j}{p_j}}\,\Big[\int_\Omega|w_n|^{\p}\,dx\Big]^\frac{\gamma^j}{p_j} \\
&\le C^\beta\,\gamma^{p\bar r'S\beta}\,\Big[\int_\Omega|w_n|^{\p}\,dx\Big]^\eta,
\end{align*}
where $\beta,\eta>0$ have been chosen such that for all $j\in\N$
\[\eta < \frac{\gamma^j}{p_j} < \beta.\]
Summarizing, we find $C>1$ such that for all $n, j\in\N$ big enough
\[
\|w_n\|_{p_j} \le C\,\|w_n\|_{\p}^{\eta\p}.
\]
Letting $j\to\infty$ and recalling that $w_n\in L^\infty(\Omega)$, we have for all $n\in\N$ large enough
\[\|w_n\|_\infty \le C\,\|w_n\|_{\p}^{\eta\p}.\]
Finally, from $w_n\to 0$ in $L^{\p}(\Omega)$ we infer $w_n\to 0$ in $L^\infty(\Omega)$ as well, thus proving \eqref{uni}.
\vskip2pt
\noindent
We can now conclude the proof. For $n\in\N$ big enough, \eqref{lag} rephrases as
\beq\label{lag1}
\fpl u_n = f(x,u_n)-\lambda_n(u_n-u_0)^{\p-1} \ \text{in $W^{-s, p'}(\Omega)$,}
\eeq
with $\lambda_n\ge 0$ (possibly $\lambda_n\to\infty$). As a consequence of \eqref{uni}, the sequence $(u_n)_{n}$ is bounded in $L^\infty(\Omega)$, so by \eqref{gro} we see that $(f(\cdot,u_n))_{n}$ is uniformly bounded as well. To go further we need a uniform bound on $(\lambda_n\,(u_n-u_0)^{\p-1})_{n}$. Testing again \eqref{dif} with $w_n^q$ (with $w_n=u_n-u_0$, $q\ge 1$) and applying Lemma \ref{mon}, we get for all $n\in\N$ big enough
\begin{align*}
\lambda_n\,\int_\Omega|w_n|^{\p+q-1}\,dx &\le \int_\Omega g(x,u_n)\, w_n^q\,dx \\
&\le C\,\int_\Omega |w_n|^q\,dx \\
&\le C\,\Big[\int_\Omega|w_n|^{\p+q-1}\,dx\Big]^\frac{q}{\p+q-1}|\Omega|^\frac{\p-1}{\p+q-1}
\end{align*}
(with $C>0$ independent of $n$, $q$), which implies
\[\lambda_n\,\|w_n\|_{\p+q-1}^{\p-1} \le C\,|\Omega|^\frac{\p-1}{\p+q-1}.\]
Letting $q\to\infty$, we have
\[\lambda_n\,\|w_n\|_\infty^{\p-1}\le C,\]
i.e., $(\lambda_n\,(u_n-u_0)^{\p-1})_{n}$ is a bounded sequence in $L^\infty(\Omega)$. Then, \eqref{lag1} and Lemma \ref{reg} imply that $(u_n)_{n}$ is bounded in $C^\alpha_s(\overline\Omega)$. By the compact embedding $C^\alpha_s(\overline\Omega)\hookrightarrow C_{s}^{0}(\bar \Omega)$, passing to a subsequence we have $u_n\to u_0$ in $C_{s}^{0}(\overline\Omega)$. So, for all $n\in\N$ big enough we have $\|u_n-u_0\|_{C^{0}_{s}}\le\sigma$.
\vskip2pt
\noindent
On the other hand, being $(u_{n})_{n}$ bounded in $L^{\infty}(\Omega)$, for $n$ large enough we have $J_{n}(u_{n})=J(u_{n})$, so that by \eqref{jjn} we have $J(u_n)<J(u_0)$. Thus we reached a contradiction to \ref{svh2}, and \ref{svh1} is proved.
\end{proof}

\section{An application}\label{sec4}

\noindent
To conclude we just want to give an example of how our result works, presenting a nonlinear extension of \cite[Theorem 3.3]{DI}. We make on the reaction $f$ in problem \eqref{dir} the following assumptions:
\begin{itemize}
\item[${\bf H}$] $f:\Omega\times\R\to\R$ is a Carath\'eodory map such that
\begin{enumroman}
\item\label{h1} $|f(x,t)|\le C_0(1+|t|^{q-1})$ for a.e.\ $x\in\Omega$, all $t\in\R$ ($C_0>0$, \ $q\in(p,\p)$);\vskip6pt
\item\label{h2} $f(x,t)t\ge 0$ for a.e.\ $x\in\Omega$, all $t\in\R$;\vskip3pt
\item\label{h3} $\displaystyle\limsup_{|t|\to\infty}\frac{F(x,t)}{|t|^p} \le 0$ uniformly for a.e.\ $x\in\Omega$;
\item\label{h4} $\displaystyle\liminf_{t\to 0}\frac{F(x,t)}{|t|^p} > \frac{\lambda_2}{p}$ uniformly for a.e.\ $x\in\Omega$.
\end{enumroman}
\end{itemize}
Here $\lambda_2>0$ denotes the second (variational) eigenvalue of $\fpl$ in $W^{s,p}_0(\Omega)$ (see \cite{BP} for details). Under these assumptions, we prove the following multiplicity result for problem \eqref{dir}:

\begin{theorem}\label{mult}
Let ${\bf H}$ be satisfied. Then, problem \eqref{dir} admits at least three nontrivial solutions.
\end{theorem}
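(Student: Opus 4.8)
\emph{The plan.} Conditions \ref{h1}--\ref{h4} are tailored to the classical scheme that combines two constant-sign local minimizers with a Morse-theoretic description of the trivial solution, and I would organize the argument in four steps. First I would fix the variational framework: by \ref{h1} the reaction is strictly subcritical, so $J\in C^1(W^{s,p}_0(\Omega))$; by \ref{h3} together with the continuous embedding $W^{s,p}_0(\Omega)\hookrightarrow L^p(\Omega)$ the functional $J$ is coercive; and since $\fpl$ is of type $(S_+)$ (see \cite{ILPS}) while $u\mapsto f(\cdot,u)$ is compact from $W^{s,p}_0(\Omega)$ into $W^{-s,p'}(\Omega)$, $J$ satisfies the Palais--Smale condition. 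I would also observe that \ref{h4} forces $f(x,0)=0$ for a.e.\ $x\in\Omega$ --- otherwise $F(x,t)/|t|^p\to-\infty$ as $t\to 0$ from one side, contradicting the uniform lower bound in \ref{h4} --- so that $0$ is a trivial solution of \eqref{dir} with $J(0)=0$.

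The second step produces two constant-sign nontrivial solutions. Truncating, set $f_\pm(x,t)=f(x,\pm t^\pm)$, with primitives $F_\pm$ and energies $J_\pm$; by \ref{h2}--\ref{h3} each $J_\pm$ is coercive and sequentially weakly lower semicontinuous, hence has a global minimizer $u_\pm$, and by the sign condition \ref{h2} one gets $\pm u_\pm\ge 0$ with $u_\pm$ solving \eqref{dir}. Denoting by $\lambda_1$ the first eigenvalue of $\fpl$ and by $e_1$ a positive first eigenfunction (see \cite{BP}), testing with a small multiple of $e_1$ and using \ref{h4} together with $\lambda_1<\lambda_2$ gives $J_\pm(u_\pm)\le J(\pm t\,e_1)<0=J_\pm(0)$ for small $t>0$, so $u_\pm\ne 0$ and $u_+\ne u_-$. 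By \ref{h1} and Lemmas \ref{apb} and \ref{reg} we have $u_\pm\in C^0_s(\overline\Omega)$, and since $\pm u_\pm\ge 0$ solves \eqref{dir} with nonnegative right-hand side (again by \ref{h2}), the Hopf-type lemma of \cite{DQ} recalled in comment $(a)$ places $u_\pm$ in the interior of the positive, resp.\ negative, cone of $C^0_s(\overline\Omega)$. Then every small $C^0_s$-perturbation of $u_+$ is nonnegative, where $J=J_+$, so $u_+$ fulfils \ref{svh2} with $u_0=u_+$, and Theorem \ref{svh} makes $u_+$ a local minimizer of $J$ in $W^{s,p}_0(\Omega)$; likewise for $u_-$. (If $u_+$ or $u_-$ fails to be isolated in the critical set, \eqref{dir} already has infinitely many solutions; otherwise $C_1(J,u_\pm)=0$.)

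The step I expect to be the main obstacle is the computation of the critical groups of $J$ at the origin, namely $C_0(J,0)=C_1(J,0)=0$. Vanishing of $C_0$ is immediate, since $J(t e_1)<0=J(0)$ for small $t>0$ shows $0$ is not a local minimizer. For $C_1(J,0)$ I would invoke the mountain-pass characterization of the second eigenvalue $\lambda_2$ of $\fpl$ from \cite{BP}: choosing $\delta>0$ with $pF(x,t)\ge(\lambda_2+2p\delta)|t|^p$ near $t=0$ (by \ref{h4}) and a path $\gamma_0$ on the sphere $\{\|u\|_p=1\}$ joining $-e_1$ to $e_1$ with $\max_{u\in\gamma_0}\|u\|_{s,p}^p\le\lambda_2+p\delta$, a short computation, using \ref{h1} to control an error term of order $r^q$ with $q>p$, yields $J(ru)\le -\delta r^p+C\,r^q<0$ uniformly for $u\in\gamma_0$ when $r>0$ is small. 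This exhibits, arbitrarily close to $0$, a set on which $J<0=J(0)$ whose cohomological index is at least $2$, which forces $C_1(J,0)=0$. Making this last implication rigorous --- that is, transporting the Morse-theoretic and cohomological-index machinery developed for $p$-Laplacian type operators, together with the eigenvalue theory of \cite{BP}, to the fractional operator $\fpl$ --- is the technical heart of the argument.

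Finally I would conclude by a min-max argument. Since $u_+$ and $u_-$ are isolated local minimizers of the $C^1$, Palais--Smale functional $J$, a mountain-pass argument between them (in the Pucci--Serrin / Ghoussoub--Preiss form) produces a third critical point $u^*\notin\{u_+,u_-\}$ that is either of mountain-pass type, hence with $C_1(J,u^*)\ne 0$, or can be taken to be a further local minimizer, necessarily at energy $\le\max\{J(u_+),J(u_-)\}<0=J(0)$. In the first case $u^*\ne 0$ because $C_1(J,0)=0$ by the previous step; in the second case $u^*\ne 0$ because $J(u^*)<J(0)$. Hence $u_+$, $u_-$, $u^*$ are three distinct nontrivial solutions of \eqref{dir}, which is the assertion of Theorem \ref{mult}.
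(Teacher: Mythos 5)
Your plan reproduces the paper's first two steps essentially verbatim: truncate to $f_\pm$, obtain constant-sign global minimizers $u_\pm$ of the coercive $J_\pm$, deduce $u_\pm\neq 0$ from ${\bf H}$\ref{h4}, use Lemmas \ref{apb}--\ref{reg} and the Hopf lemma of \cite{DQ} to place $u_\pm$ in the interior of $\pm C^0_s(\overline\Omega)_+$, note $J=J_\pm$ there, and invoke Theorem \ref{svh} to promote $u_\pm$ to $W^{s,p}_0(\Omega)$-local minimizers of $J$. Your preliminary observation that ${\bf H}$\ref{h4} forces $f(\cdot,0)=0$ is correct and worth making explicit.

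Where you diverge is the third solution. The paper obtains $\tilde u$ by the mountain pass theorem between $u_+$ and $u_-$ and excludes $\tilde u=0$ by combining ${\bf H}$\ref{h4} with the mountain-pass characterization of $\lambda_2$ from \cite[Theorem 5.3]{BP} to push the mountain-pass level strictly below $J(0)=0$, deferring details to \cite{DI}. You instead take the Morse-theoretic route: show $C_0(J,0)=C_1(J,0)=0$ by a cohomological-index argument near the origin, then argue that a mountain-pass critical point $u^*$ must carry $C_1(J,u^*)\neq 0$, so $u^*\neq 0$. The striking thing is that the engine driving both versions is the same: your path $\gamma_0$ on the $L^p$-sphere joining $-e_1$ to $e_1$ with $\sup_{\gamma_0}\|\cdot\|_{s,p}^p\le\lambda_2+p\delta$, together with the estimate $J(ru)\le -\delta r^p+Cr^q<0$ from ${\bf H}$\ref{h1}--\ref{h4}. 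The paper feeds this estimate directly into a level comparison (splice $r\gamma_0$ with segments joining $u_\pm$ to $\pm re_1$ inside $\{J<0\}$, e.g.\ via the second deformation lemma applied to $J_\pm$, to show the mountain-pass level is $<0$); you route the same estimate through critical groups. Your route works, and the $C^1$-Morse theory needed for $\fpl$ is indeed developed in \cite{ILPS}, but it imports noticeably more structure than the conclusion requires --- the $C_1\neq 0$ characterization of mountain-pass points, the isolatedness caveats you mention in passing, and the cohomological-index lower bound for $C_1(J,0)$ in the $C^1$ (non-$C^2$) setting --- and you yourself flag this transfer as "the technical heart." In short: same backbone, same key $\lambda_2$-path estimate, but where the paper reads off a negative mountain-pass level, you deduce the same conclusion through critical groups at a higher technical cost.

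Two minor points to keep straight if you write this up: first, $u_+\ge 0$ is not automatic from minimizing $J_+$; you should test $\fpl u_+=f_+(\cdot,u_+)$ against $u_+^-$ and use $f(\cdot,0)=0$ plus a monotonicity inequality for $\fpl$ to kill the negative part. Second, your remark that $C_1(J,u_\pm)=0$ is true for local minimizers but not what you need --- the dichotomy you want is between $C_1(J,0)=0$ and $C_1(J,u^*)\neq 0$ for the mountain-pass point, and the latter requires the isolatedness discussion you only gave for $u_\pm$.
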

\begin{proof}
We just sketch the proof, referring to \cite{DI} for details. First we introduce two truncated reactions and their primitives, defined for all $(x,t)\in\Omega\times\R$ by
\[f_\pm(x,t) = f(x,\pm t^\pm), \qquad F_\pm(x,t) = \int_0^t f_\pm(x,\tau)\,d\tau\]
(here $t^\pm=\max\{\pm t,0\}$), and note that $f_\pm$ are Carath\'eodory with subcritical growth due to ${\bf H}$ \ref{h1} \ref{h2}. We introduce the corresponding truncated energy functionals $J_\pm\in C^1(W^{s,p}_0(\Omega))$ defined by
\[J_\pm(u) = \frac{\|u\|_{s,p}^p}{p}-\int_\Omega F_\pm(x,u)\,dx.\]
By ${\bf H}$ \ref{h3}, $J_+$ is coercive and thus it admits a global minimizer $u_+\in W^{s,p}_0(\Omega)$. Using ${\bf H}$ \ref{h4}, we easily see that $u_+\neq 0$. Clearly, $u_+$ is a weak solution of the auxiliary problem
\[\begin{cases}
\fpl u = f_+(x,u) & \text{in $\Omega$} \\
u = 0 & \text{in $\R^N\setminus\Omega$,}
\end{cases}\]
hence by Lemmas \ref{apb}, \ref{reg} we have $u_+\in C^0_s(\overline\Omega)$. Besides, by the fractional $p$-Laplacian Hopf's lemma (see \cite[Theorem 1.5]{DQ}), we have uniformly for all $x\in\partial\Omega$
\[\lim_{\Omega\ni y\to x}\frac{u_+(y)}{\ds(y)} > 0.\]
Thus, $u_+$ lies in the interior of the positive order cone $C^0_s(\overline\Omega)_+$ of $C^0_s(\overline\Omega)$ (see \cite[Lemma 5.1]{ILPS}). Since $J_+=J$ on $C^0_s(\overline\Omega)_+$, we see that $u_+$ is a local minimizer of $J$ in the $C^0_s(\overline\Omega)$-topology. By Theorem \ref{svh}, $u_+$ is as well a local minimizer of $J$ in the $W^{s,p}_0(\Omega)$-topology.
\vskip2pt
\noindent
Arguing similarly on $J_-$, we detect a local minimizer $u_-\in -{\rm int}(C^0_s(\overline\Omega)_+)$ of $J$. Plus, $J$ satisfies the Palais-Smale condition, hence by the mountain pass theorem it admits one more critical point $\tilde u\in W^{s,p}_0(\Omega)$. Exploiting condition ${\bf H}$ \ref{h4} and the variational characterization of the second eigenvalue $\lambda_2$ (see \cite[Theorem 5.3]{BP}), we see that $\tilde u\neq 0$. Finally, we use Lemmas \ref{apb}, \ref{reg} to deduce that $\tilde u\in C^0_s(\overline\Omega)$.
\vskip2pt
\noindent
All in all, $u_+,u_-,\tilde u\in W^{s,p}_0(\Omega)\cap C^0_s(\overline\Omega)\setminus\{0\}$ are three nontrivial solutions of \eqref{dir}.
\end{proof}

\vskip10pt
\noindent
{\small {\bf Acknowledgement.} All authors are members of GNAMPA (Gruppo Nazionale per l'Analisi Matematica, la Probabilit\`a e le loro Applicazioni) of INdAM (Istituto Nazionale di Alta Matematica 'Francesco Severi'). A.\ Iannizzotto and S.\ Mosconi are supported by the grant PRIN n.\ 2017AYM8XW: {\em Non-linear Differential Problems via Variational, Topological and Set-valued Methods}. A.\ Iannizzotto is also supported by the research project {\em Integro-differential Equations and nonlocal Problems}, funded by Fondazione di Sardegna (2017), S.\ Mosconi by the grant PdR 2018-2020 - linea di intervento 2: {\em Metodi Variazionali ed Equazioni Differenziali} of the University of Catania.}

\end{document}